\newtheorem{theorem}{Theorem}[section]
\newtheorem*{theorem*}{Theorem}
\newtheorem{lemma}[theorem]{Lemma}
\newtheorem{corollary}[theorem]{Corollary}
\theoremstyle{definition}
\newtheorem{definition}[theorem]{Definition}
\theoremstyle{remark}
\numberwithin{equation}{section}
\begin{document}

\title[Regularity of radicals and arithmetic degrees]{Upper bounds for regularity of radicals of ideals and arithmetic degrees}


\author{Yihui Liang}
\address{Department of Mathematics, Purdue University, 150 N. University Street, West Lafayette, IN47907, USA}
\curraddr{}
\email{liang226@purdue.edu}
\thanks{The author was partially supported by the Ross-Lynn Research Scholar Fund from Purdue University.}

\subjclass[2020]{Primary 13D02, 13H15, 13P10}

\date{}

\commby{}

\keywords{Castelnuovo-Mumford regularity, radical, arithmetic degrees}

\begin{abstract}
Let $S$ be a polynomial ring in $n$ variables over a field. Let $I$ be a homogeneous ideal in $S$ generated by forms of degree at most $d$ with $\text{dim}(S/I)=r$. In the first part of this paper, we show how to derive from a result of Hoa an upper bound for the regularity of $\sqrt{I}$. More specifically we show that $\text{reg}(\sqrt{I})\leq d^{(n-1)2^{r-1}}$. In the second part, we show that the $r$-th arithmetic degree of $I$ is bounded above by $2\cdot d^{2^{n-r-1}}$. This is done by proving upper bounds for arithmetic degrees of strongly stable ideals and ideals of Borel type.
\end{abstract}

\maketitle

	\section{Introduction}
Let $S=\mathbb{K}[x_1,\dots,x_n]$ be a standard graded polynomial ring over a field. \linebreak Castelnuovo-Mumford regularity is an important homological invariant that measures the complexity of performing computations with finitely generated graded $S$-modules, for instance calculating graded free resolutions and the associated invariants. In \cite{Ravirad}, Ravi was the first to investigate the relationship between regularity of ideals and regularity of their radicals. Recall that the radical of an ideal $I$, denoted by $\sqrt{I}$, is defined as the set of all elements $x$ such that some power of $x$ lies in $I$. More specifically, Ravi showed that $\text{reg}(\sqrt{I})\leq \text{reg}(I)$ when $I$ is a monomial ideal, or when $R/\sqrt{I}$ is a Buchsbaum module, or in some cases when $\sqrt{I}$ defines a curve in $\mathbb{P}^3$. In addition, he raised the following question: Is it always true that $\text{reg}(\sqrt{I})\leq \text{reg}(I)$ for any homogeneous ideal $I$? This question is answered negatively by Chardin and D'Cruz \cite{chardincruz}, as they provided a family of ideals $I_{m,n}$ such that $\text{reg}(I_{m,n})=m+n+2$ but $\text{reg}(\sqrt{I})=mn+2$. It is natural to ask the next question: Does there exist a bound of $\text{reg}(\sqrt{I})$ in terms of $\text{reg}(I)$? To our best knowledge, so far there has not been any answer to the above question. 

In the first part of the paper, we explain how to specialize an existing formula of Hoa \cite{hoareg} to obtain the following bound (see Corollary \ref{regraddegreebd}). Recall that the generating degree is bounded above by the regularity so one may replace $d$ by $\text{reg}(I)$ in the inequalities below. 

\begin{theorem*}
	Let $I$ be a homogeneous ideal in $S$ of dimension $r\geq 2$ and generated by forms of degree at most $d$, then
	\[
	\text{reg}(\sqrt{I})\leq \left(\frac{d^{n-r}(d^{n-r}-1)}{2}+d^{n-1}\right)^{2^{r-2}} \leq d^{(n-1)2^{r-1}}.
	\]
\end{theorem*}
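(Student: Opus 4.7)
The proof is a specialization argument: the plan is to invoke Hoa's regularity bound from \cite{hoareg}, which expresses $\text{reg}(\sqrt{I})$ in terms of degree-type invariants of $I$ and its successive generic hyperplane sections, and then to bound each such invariant uniformly in terms of the single parameter $d$. Hoa's estimate is built inductively on $\dim(S/I)$: the base case $\dim = 2$ (a one-dimensional projective scheme) produces a bound of shape $\frac{D(D-1)}{2} + E$, where $D$ is a degree-of-scheme input and $E$ is a Castelnuovo-type contribution; the inductive step from dimension $r$ to $r-1$ via a generic hyperplane squares the bound up to lower-order corrections, which is the mechanism that produces the exponent $2^{r-2}$.

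The specialization proceeds as follows. Since $I$ is generated in degree at most $d$, a Bezout-type argument bounds the zero-dimensional degree cut out by $r-1$ generic hyperplane sections by $d^{n-r}$, and the Castelnuovo-type contribution by $d^{n-1}$. Plugging these into Hoa's base expression gives $\frac{d^{n-r}(d^{n-r}-1)}{2} + d^{n-1}$, and propagating through the inductive squaring step $r-2$ times yields the first displayed inequality.

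For the second inequality, I would show directly that $\frac{d^{n-r}(d^{n-r}-1)}{2} + d^{n-1} \leq d^{2(n-1)}$. Using $r \geq 2$, the first summand is at most $d^{2(n-r)}/2 \leq d^{2(n-2)}$, and the second is bounded by $d^{2(n-1)}/2$ once $d$ and $n$ are not too small; the cases $d=1$ or $n=2$ are checked directly since they make $\sqrt{I}$ either generated by variables or of very low embedding dimension. Raising the inequality to the $2^{r-2}$-th power produces $d^{2(n-1)\cdot 2^{r-2}} = d^{(n-1) \cdot 2^{r-1}}$.

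The main obstacle is the bookkeeping of matching Hoa's invariants (which are stated in terms of arithmetic degrees and $a$-invariants of sections) to the single-degree hypothesis on generators and verifying that the recursion in \cite{hoareg} really gives a clean squaring per dimension drop rather than a larger tower, as this is what controls the exponent at $2^{r-2}$ instead of something worse. Once this translation is made, the remaining numerical simplifications to reach the clean form $d^{(n-1)2^{r-1}}$ are routine.
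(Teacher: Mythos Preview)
Your outline has the right shape---apply Hoa's bound, then control its inputs by powers of $d$---but it misidentifies what those inputs are, and in doing so skips the one step that carries the content.

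Hoa's theorem, as the paper uses it, is already a closed-form statement: if $S/J$ is \emph{reduced} of dimension $r\geq 2$, then
\[
\operatorname{reg}(J)\;\leq\;\left(\frac{e(S/J)\bigl(e(S/J)-1\bigr)}{2}+\operatorname{arith-deg}(J)\right)^{2^{r-2}}.
\]
No hyperplane sections or $a$-invariants appear in the hypotheses you have to verify; the exponent $2^{r-2}$ comes pre-packaged. The two inputs are the multiplicity and the \emph{arithmetic degree} of $J$. Because the reducedness hypothesis forces you to take $J=\sqrt{I}$, the bound is stated in terms of $e(S/\sqrt{I})$ and $\operatorname{arith-deg}(\sqrt{I})$, not in terms of invariants of $I$.

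The step your sketch omits is the passage from $\sqrt{I}$ back to $I$. For the multiplicity this is harmless: $e(S/\sqrt{I})\leq e(S/I)\leq d^{\,n-r}$, the last inequality being the B\'ezout-type bound you allude to. But the second input is $\operatorname{arith-deg}(\sqrt{I})$, not a ``Castelnuovo-type contribution,'' and it is \emph{not} true that $\operatorname{arith-deg}(I)\leq d^{\,n-1}$ for arbitrary homogeneous $I$---indeed, bounding arithmetic degrees requires the doubly-exponential estimates of the second half of the paper. What makes the argument work is that $\sqrt{I}$ has no embedded primes, so
\[
\operatorname{arith-deg}(\sqrt{I})=\operatorname{geom-deg}(\sqrt{I})\leq\operatorname{geom-deg}(I)\leq d^{\,n-1},
\]
the last inequality coming from the Sturmfels--Trung--Vogel bound applied with a maximal regular sequence inside $I$. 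Without this reduction from arithmetic to geometric degree, the $d^{\,n-1}$ bound on the second summand is unjustified. Once that identification is made, the numerical cleanup you describe for the outer inequality is fine.
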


In the second part of the paper, we focus on proving upper bounds for arithmetic degrees of homogeneous ideals in terms of their generating degree (see $\S$4). Arithmetic degrees were introduced by Bayer and Mumford in \cite{computealggeo}, and they arise as refinements of multiplicities which serve as an important complexity measure. For this reason, it is desirable to find good bounds for this invariant. In the literature, there are two classical results of \cite{sturmfels} and \cite{hoa1998castelnuovo} which give bounds on arithmetic degrees for arbitrary monomial ideals in terms of their generating degrees. Since the arithmetic degrees of ideals are bounded above by the arithmetic degrees of their initial ideals, in particular the generic initial ideals, it is helpful to consider strongly stable ideals and ideals of Borel type.  For strongly stable ideals, bounds which depends on the primary decompositions of the ideals were given in \cite{doi:10.1080/00927872.2020.1726940}. Some bounds were also proved for square-free strongly stable ideals (see \cite{squarefree1} and \cite{squarefree2}). 

In Section 4.1, we first prove upper bounds for arithmetic degrees of strongly stable ideals and ideals of Borel type (see Corollary \ref{stronglytstablebd} and \ref{weaklystablebd}). Then in Section 4.2, we derive from our result on ideals of Borel type the following general bounds for homogeneous ideals (see Theorem \ref{arithdeghomideal}):

\begin{theorem*}
	Let $I$ be a homogeneous ideal in $S=\mathbb{K}[x_1,\dots,x_n]$ generated by forms of degree at most $d$. For all $r=0,\dots,n-1$, the $r$-th arithmetic degree of $I$ is bounded above by
	\[
	\text{arith-deg}_r(I)\leq 2\cdot d^{2^{n-r-1}}.
	\]
\end{theorem*}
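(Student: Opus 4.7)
The plan is to derive the theorem from the Borel-type bound established in Section~4.1 by passing to a generic initial ideal and then carefully controlling the degree invariant that appears.

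First I would reduce to a monomial ideal of Borel type. Let $J = \text{gin}(I)$ denote the generic initial ideal with respect to the reverse lexicographic order. By standard semi-continuity of associated primes under Gr\"obner degeneration one has $\text{arith-deg}_r(I) \le \text{arith-deg}_r(J)$. Being Borel-fixed, $J$ is of Borel type, so Corollary~\ref{weaklystablebd} supplies an estimate of the form $\text{arith-deg}_r(J) \le g(D,n,r)$, where $D$ is the largest degree of a minimal generator of $J$. The remaining task is to bound $g(D,n,r)$ by a function of $d$ alone.

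The main obstacle is that $D$ is not bounded by $d$: in general it is only bounded by $\text{reg}(I)$, which by Galligo/Bayer--Mumford estimates is doubly exponential in $n$, so substituting this directly into $g$ would blow up the target exponent. To circumvent the loss, I would exploit two structural features of Borel-type ideals: the associated primes are all of the form $(x_1,\ldots,x_i)$, so for each $r$ there is at most one associated prime of dimension $r$, and the multiplicity at that prime can be computed by iterated saturations against the last variables. Since for the reverse-lex gin saturation commutes with passage to the gin, the required multiplicities can be extracted from a sequence of saturations of $I$ itself, and the key combinatorial point is that each such saturation raises the relevant generating-degree bound only quadratically.

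With that quadratic propagation in hand, I would run an induction on $n-r$. The base case $r=n-1$ yields a bound linear in $d$, accounting for the prefactor $2$, and each inductive step squares the existing bound, producing the exponent $2^{n-r-1}$ after $n-r-1$ iterations. The crux, and the step I expect to require the most care, is establishing this quadratic propagation of the degree invariant under saturation; once that is verified, the induction is straightforward and gives the asserted inequality $\text{arith-deg}_r(I) \le 2 \cdot d^{2^{n-r-1}}$.
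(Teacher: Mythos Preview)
Your overall architecture is right---pass to the reverse-lex generic initial ideal $J$, use the Borel-type estimate, and then control the degree data---but you misread what Corollary~\ref{weaklystablebd} actually gives, and this misreading is precisely what forces you into the vague saturation workaround. The corollary does \emph{not} produce a bound of the form $g(D,n,r)$ in the single number $D=D(J)$; it gives the refined product $\text{arith-deg}_r(J)\le \prod_{i=1}^{n-r} D(J_{[i]})$, where $J_{[i]}$ is the image of $J$ in $\mathbb{K}[x_1,\dots,x_i]$. That refinement is exactly what sidesteps the obstacle you flag, so no separate induction or ``quadratic propagation under saturation'' is needed.

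The missing idea is how to bound each factor $D(J_{[i]})$ in terms of $d$. The paper uses that for the reverse-lex gin one has $J_{[i]}=\text{gin}(I_{\langle i\rangle})$, where $I_{\langle i\rangle}$ is the image of $I$ modulo $n-i$ general linear forms; hence $D(J_{[i]})\le \text{reg}(J_{[i]})=\text{reg}(I_{\langle i\rangle})$. Since $I_{\langle i\rangle}$ lives in a polynomial ring in only $i$ variables and is still generated in degrees $\le d$, the standard doubly exponential regularity bound applied \emph{in $i$ variables} gives $\text{reg}(I_{\langle i\rangle})\le d^{2^{i-2}}$ for $i\ge 3$, with $d$ and $2d-1$ for $i=1,2$. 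Multiplying the factors yields $d\cdot(2d)\cdot\prod_{i=3}^{n-r} d^{2^{i-2}}=2\cdot d^{2^{n-r-1}}$. So the mechanism is hyperplane \emph{sections} (modding out by general linear forms), not saturations, and the ``quadratic growth'' you intuit is already packaged inside the regularity bound in $i$ variables; your proposal never names this step, and without it the argument does not close.
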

Currently all the existing bounds for arithmetic degrees of arbitrary homogeneous ideals depend on their regularities, for example in \cite{computealggeo} Bayer and Mumford showed that $\text{arith-deg}_r(I)\leq \text{reg}(I)^{n-r}$. With their formula, to get a bound in terms of the generating degree, one needs to combine it with a doubly exponential bound of regularity (for instance \cite[Corollary 2.13]{caviglia2021linearly}) and the inequality becomes $\text{arith-deg}_r(I)\leq d^{(n-r)2^{n-r-1}}$. Notice that our bound is a significant improvement as we eliminate the factor of $n-r$ in the exponent.

\section{Preliminaries}

In this section, we provide definitions and some basic facts about multiplicity, geometric degree, arithmetic degree, Castelnuovo–Mumford regularity, strongly stable ideal, and ideal of Borel type. For a more thorough introduction to these subjects, see \cite{eisenbook}, \cite{computealggeo}, and \cite{herzog}. 

\subsection{Castelnuovo-Mumford regularity, multiplicity, geometric degree, and arithmetic degree}

Let $\mathbb{K}$ be an arbitrary field and $S$ be the polynomial ring $\mathbb{K}[x_1,\dots,x_n]$. All the invariants that will be introduced in this section do not change if we extend our field $\mathbb{K}$ to $\mathbb{K}(y)$ where $y$ is a new indeterminate, thus we may assume our field is infinite when needed.

Let $M$ be a finitely generated graded module over $S$. Let $\beta_{ij}(M):=$\linebreak$\text{dim}_{\mathbb{K}}(\text{Tor}^S_i(M,\mathbb{K})_j)$ be the graded Betti numbers of $M$. The \textit{Castelnuovo–Mumford regularity} of $M$ is defined as $\text{reg}(M)=\text{max}\{j : \beta_{i,i+j}(M)\neq 0 \text{ for some }i \}$.

For a graded module $M$, the \textit{multiplicity} of $M$, denoted by $e(M)$, is the normalized leading coefficient of the Hilbert polynomial of $M$. Let $I$ be a homogeneous ideal and $\prec$ be any monomial order, the initial ideal of $I$ with respect to $\prec$ is denoted by $\text{in}_\prec(I)$. Since the Hilbert function of $S/I$ agrees with the Hilbert function of $S/\text{in}_\prec(I)$, the multiplicities of $S/I$ and $S/\text{in}_\prec(I)$ agree as well.

For a homogeneous ideal $I$ in $S$, let $\text{Ass}(S/I)$ denote the set of associated primes of $S/I$ and $\text{Min}(S/I)$ denote the set of minimal primes of $S/I$. For a prime ideal $P$, the length-multiplicity of $P$ with respect to $I$, denoted by $\text{mult}_I(P)$, is defined as the length of the largest submodule of $(S/I)_P$ with finite length. 

Let us recall the associativity formula for multiplicity:
\[
e(S/I)=\sum_{P\in \text{Min}(S/I),\text{ dim}(S/P)=\text{dim}(S/I)}\text{mult}_I(P)\cdot e(S/P).
\]
Notice that for a minimal prime $P$, $(S/I)_P$ has finite length.

The \textit{$r$-th geometric degree} of $I$ is defined as:
\begin{equation*}
	\text{geom-deg}_r(I)=\sum_{P\in \text{Min}(S/I), \text{ dim}(S/P)=r}\text{mult}_I(P)\cdot e(S/P).
\end{equation*}
The \textit{geometric degree} of $I$, denoted by $\text{geom-deg}(I)$, is the sum of $\text{geom-deg}_r(I)$ for all $r$.

The \textit{$r$-th arithmetic degree} of $I$ is defined as:
\begin{equation*}
	\text{arith-deg}_r(I)=\sum_{P\in \text{Ass}(S/I), \text{ dim}(S/P)=r}\text{mult}_I(P)\cdot e(S/P).
\end{equation*}
The \textit{arithmetic degree} of $I$, denoted by $\text{arith-deg}(I)$, is the sum of $\text{arith-deg}_r(I)$ for all $r$. It is clear from the definitions that $e(S/I)\leq \text{geom-deg}(I)\leq \text{arith-deg}(I)$.

We present in Corollary \ref{geodegcoro} an upper bound for the geometric degree of $I$, which will be used in Section 3. This bound can be deduced from the theorem below (see \cite[Theorem 4.3]{sturmfels}). A bound for the geometric degree can also be obtained using \cite[Proposition 3.5]{computealggeo} which says the $i$-th geometric degree is bounded above by $d^{n-i}$ for all $i$. From this result one gets $\text{geom-deg}(I)= \sum_{i=1}^{r} \text{geom-deg}_i(I)\leq \sum_{i=1}^{r} d^{n-i}$, where $r$ is the dimension of $S/I$. Notice that Corollary \ref{geodegcoro} is a slight improvement of the above bound. 
\begin{theorem} [Sturmfels, Trung, Vogel] \label{geodeg}
	Let $J\subset I$ be homogeneous ideals in $S$ such that $(S/J)_P$ is Cohen-Macaulay for every minimal prime $P$ of $I$. Let $f_1,\dots,f_m$ be forms in $I$ with degrees $d_1\geq d_2\geq \cdots \geq d_m$ such that $I=J+(f_1,\dots,f_m)$. Then
	\[
	\text{geom-deg}(I)\leq d_1d_2\cdots d_t\cdot \text{geom-deg}(J),
	\]
	where $t:=\text{max}\{\text{ht}(P/J) : P\in\text{Min}(I) \}$.
\end{theorem}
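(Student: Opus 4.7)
The plan is to induct on $m$ and reduce to a one-form Bezout-type inequality. Set $J_0 := J$ and $J_i := J_{i-1} + (f_i)$ for $i = 1,\ldots,m$, so that $J_m = I$. At each step, either $f_i$ strictly increases the maximum height over $J$ of a minimal prime of the current ideal --- in which case one expects to pay a factor of $d_i$ in the geometric degree --- or it does not, and the geometric degree should not grow. Since $d_1 \geq d_2 \geq \cdots \geq d_m$ and such height-increasing steps happen at most $t$ times (by the definition of $t$), careful accounting should leave the product $d_1 d_2 \cdots d_t$ on the right-hand side.

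The crucial input is the one-form inequality: if $J$ is a homogeneous ideal such that $(S/J)_P$ is Cohen--Macaulay at every minimal prime $P$ of $J + (f)$, and $f$ is a form of degree $d$, then $\text{geom-deg}(J + (f)) \leq d \cdot \text{geom-deg}(J)$. To prove this, I would fix a minimal prime $P$ of $J' := J + (f)$ and distinguish two cases. If $\text{ht}(P/J) = 0$, then $P$ is already a minimal prime of $J$ and its contribution to $\text{geom-deg}(J')$ is controlled directly by its contribution to $\text{geom-deg}(J)$. If $\text{ht}(P/J) = 1$, the Cohen--Macaulay hypothesis forces $f$ to be a non-zero-divisor on $(S/J)_P$, so the short exact sequence given by multiplication by $f$ on $(S/J)_P$, combined with the associativity formula recalled earlier in this section, yields $\text{mult}_{J'}(P) \cdot e(S/P) \leq d \cdot \sum_Q \text{mult}_J(Q) \cdot e(S/Q)$, where $Q$ ranges over the minimal primes of $J$ contained in $P$.

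The main obstacle will be the combinatorial bookkeeping when these local contributions are summed globally: a single minimal prime $P$ of $I$ may sit above several minimal primes of $J$, and in the iteration one must ensure the factors of $d_i$ are assigned consistently without overcounting. Handling this will likely require replacing the $f_i$ by suitably generic $\mathbb{K}$-linear combinations (justified by the remark earlier that the invariants in question are unchanged under adjoining an indeterminate to the field) so that the height over $J$ grows by exactly one at each relevant step and so that no minimal prime of $J$ is absorbed prematurely. Once this is arranged, iterating the one-form inequality along the chain $J_0 \subset J_1 \subset \cdots \subset J_m$ and using $d_1 \geq \cdots \geq d_m$ to match the height-increasing steps with the largest available degrees completes the proof.
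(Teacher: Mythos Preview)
The paper does not prove this theorem; it is quoted verbatim from \cite[Theorem 4.3]{sturmfels} and used only as a black box to derive Corollary~\ref{geodegcoro}. There is therefore no proof in the paper to compare your proposal against.

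For what it is worth, your outline follows the general shape of the original Sturmfels--Trung--Vogel argument: replace the $f_i$ by generic $\mathbb{K}$-linear combinations, reduce to adding one form at a time, and control the growth of the geometric degree by a one-step B\'ezout inequality at each stage. The point you flag but do not resolve---namely that the Cohen--Macaulay hypothesis is stated only at minimal primes of $I$, not at minimal primes of the intermediate ideals $J_i$, and that the local contributions must be summed without overcounting---is precisely where the technical work in the original proof lies; your sketch would need to address this before it could be called a proof.
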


\begin{corollary} \label{geodegcoro}
	Let $I$ be a homogeneous ideal in $S$ of dimension $r\neq 0$ generated by forms of degree at most $d$, then the geometric degree of $I$ is bounded above by:
	\[
	\text{geom-deg}(I)\leq d^{n-1}.
	\]
\end{corollary}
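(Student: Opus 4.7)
The plan is to apply Theorem~\ref{geodeg} with the trivial choice $J=(0)$. The Cohen-Macaulay hypothesis on $J$ is then automatic since $S$ is regular, so every localization $S_P$ is Cohen-Macaulay. Moreover, $(0)$ is the unique minimal prime of $S$, with length-multiplicity $1$, so $\text{geom-deg}((0))=e(S)=1$.

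Taking $f_1,\dots,f_m$ to be a homogeneous generating set of $I$ with degrees $d_1\geq \cdots\geq d_m$, each at most $d$, Theorem~\ref{geodeg} then yields
\[
\text{geom-deg}(I)\leq d_1 d_2 \cdots d_t\leq d^t,
\]
where $t=\max\{\text{ht}(P):P\in \text{Min}(I)\}$. It therefore suffices to establish that $t\leq n-1$.

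This bound on $t$ is the only point that requires any argument, and is where the hypothesis $r\neq 0$ enters. Every minimal prime of a homogeneous ideal is itself homogeneous, and the unique $0$-dimensional homogeneous prime of $S$ is the irrelevant maximal ideal $(x_1,\dots,x_n)$. If it were a minimal prime of $I$, then $\sqrt{I}=(x_1,\dots,x_n)$ and hence $\text{dim}(S/I)=0$, contradicting $r\neq 0$. Consequently every $P\in \text{Min}(I)$ satisfies $\text{dim}(S/P)\geq 1$, giving $\text{ht}(P)=n-\text{dim}(S/P)\leq n-1$. This forces $t\leq n-1$ and yields the desired bound $\text{geom-deg}(I)\leq d^{n-1}$.
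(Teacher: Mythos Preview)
Your argument is correct, and it is genuinely simpler than the paper's. The paper applies Theorem~\ref{geodeg} with $J$ equal to a complete intersection of height $h=\text{ht}(I)$ built from a maximal regular sequence inside a generating set of $I$ (after passing to an infinite field so that such a sequence can be extracted). This yields $\text{geom-deg}(J)=\prod_{i=1}^h \deg(f_i)\le d^h$ and $t\le n-h-1$, and the two factors combine to $d^{n-1}$. You instead take $J=(0)$: the Cohen--Macaulay hypothesis is automatic because $S_P$ is regular, $\text{geom-deg}((0))=1$, and $t=\max\{\text{ht}(P):P\in\text{Min}(I)\}\le n-1$ since $r\neq 0$ rules out the irrelevant ideal as a minimal prime. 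This gives the same bound in one step, without the infinite-field reduction and without constructing a regular sequence. The paper's choice of $J$ would only pay off if one wanted a finer inequality keeping track of the individual $\deg(f_i)$; for the stated bound $d^{n-1}$ your route is the more direct one.
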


\begin{proof}
	Without loss of generality, we may assume the field is infinite.
	Let $f_1,\dots,f_s$ be homogeneous polynomials that minimally generate $I$ with $\text{deg}(f_i)\leq d$. We may assume $f_1,\dots,f_h$ form a regular sequence where $h=n-r$ is the height of $I$, and $f_{h+1},\dots,f_s$ are relabeled so that $\text{deg}(f_{h+1})\geq \cdots \geq \text{deg}(f_{s})$. Let $J=(f_1,\dots,f_h)$, so we have $I=J+(f_{h+1},\dots,f_s)$. By assumption the homogeneous maximal ideal cannot be a minimal prime of $I$, therefore we get $t:=\text{max}\{\text{ht}(P/J):P \in\text{Min}(I)\}\leq n-h-1$. Notice that the geometric degree of $J$ is equal to its multiplicity which is equal to $\prod_{i=1}^{h}\text{deg}(f_i)$. Hence we get $\text{geom-deg}(I)\leq \prod_{i=h+1}^{t+h}\text{deg}(f_i) \cdot \text{geom-deg}(J)\leq d^{n-h-1} \cdot \prod_{i=1}^{h}\text{deg}(f_i) \leq d^{n-h-1}\cdot d^{h}=d^{n-1}$.
	
\end{proof}

\subsection{Strongly-stable ideals and ideals of Borel type}	
Recall that a monomial ideal $I$ is an ideal of \textit{Borel type} (also called weakly stable or an ideal of nested type) if for every monomial $u\in I$, if $l$ is the maximum integer such that $x_i^l|u$, then for every $j\leq i$ there exists some $t\geq 0$ such that $x_j^tu/x_i^l\in I$. Another equivalent definition says $I$ is an ideal of Borel type if every associated prime of $I$ is of the form $(x_1,\dots,x_i)$ for some $i$.

A monomial ideal $I$ is \textit{strongly stable} if for every monomial $u\in I$, for any $i$ such that $x_i|u$ and $j\leq i$, we have $x_ju/x_i\in I$. 

Given a set of monomials $U=\{u_1,\dots,u_s\}$, one can consider the smallest strongly stable ideal $I$ that contains $U$. These $u_i$'s are sometimes called Borel generators of $I$ (see \cite{FRANCISCO2011522} for more information about Borel generators).

Let $I$ be a homogeneous ideal and $\prec$ be any monomial order such that $x_n\prec x_{n-1}\prec \cdots \prec x_1$. If $\mathbb{K}$ is infinite, then there exists a nonempty Zariski open set $U\subset \text{GL}_n(\mathbb{K})$ such that $\text{in}_\prec(\alpha I)=\text{in}_\prec(\alpha^\prime I)$ for all $\alpha,\alpha^\prime \in U$. The \textit{generic initial ideal} of $I$ with respect to $\prec$ is defined as $\text{gin}_\prec(I):=\text{in}_\prec(\alpha I)$ for any $\alpha \in U$. It is well-known that $\text{gin}_\prec(I)$ is an ideal of Borel type. Moreover if $\text{char}(\mathbb{K})=0$, $\text{gin}_\prec(I)$ is a strongly stable ideal.

\section{Regularity of radical of ideal}

Let $I$ be a homogeneous ideal in $S$ generated by forms of degree at most $d$. In this section we use a theorem of Hoa in \cite{hoareg} to derive an upper bound for the regularity of radical of $I$. The theorem of Hoa is given below:

\begin{theorem} [Hoa] \label{hoathm}
	Assume $S/J$ is a reduced ring of dimension $r\geq 2$ and multiplicity $e(S/J)$, then
	\[
	\text{reg}(J)\leq\left(\frac{e(S/J)(e(S/J)-1)}{2}+\text{arith-deg}(J)\right)^{2^{r-2}}.
	\]
\end{theorem}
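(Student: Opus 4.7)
The shape of the bound --- with its doubly exponential dependence $2^{r-2}$ on $r$ --- strongly suggests induction on the dimension $r$, where each reduction step roughly squares the controlling quantity $\tfrac{e(S/J)(e(S/J)-1)}{2} + \text{arith-deg}(J)$. The plan is therefore to induct on $r \geq 2$, with base case the reduced projective curves ($r=2$) and an inductive step that passes to a generic hyperplane section followed by radicalization.

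For the base case, $S/J$ defines a reduced projective curve, and I would invoke classical bounds on the regularity of reduced curves in $\mathbb{P}^{n-1}$ in terms of degree (in the spirit of Gruson--Lazarsfeld--Peskine and their extensions to reducible/singular reduced curves), which give estimates essentially of the form $\binom{e-1}{2}+1$. The term $\tfrac{e(S/J)(e(S/J)-1)}{2}$ matches this shape, while the additive $\text{arith-deg}(J)$ is there to absorb the contribution of lower-dimensional associated primes that a pure top-degree analysis would miss. Concretely I would establish the $r=2$ bound by Hilbert-function analysis together with a Mayer--Vietoris decomposition along irreducible components of $V(J)$.

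For the inductive step, pick a generic linear form $\ell \in S_1$, set $\overline{J} = \sqrt{J+(\ell)}$, and apply the induction hypothesis to $S/\overline{J}$, which is reduced of dimension $r-1$. Two comparison estimates are needed. First, a lift $\text{reg}(J) \leq \text{reg}(\overline{J}) + (\text{lower-order terms})$ via the short exact sequence $0 \to (S/J)(-1) \xrightarrow{\ell} S/J \to S/(J+(\ell)) \to 0$ combined with a regularity comparison between $J+(\ell)$ and its radical. Second, bounds on $e(S/\overline{J})$ and $\text{arith-deg}(\overline{J})$ in terms of $e(S/J)$ and $\text{arith-deg}(J)$. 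The concrete target is an inequality of the form $\tfrac{e'(e'-1)}{2} + \text{arith-deg}(\overline{J}) \leq \bigl(\tfrac{e(e-1)}{2} + \text{arith-deg}(J)\bigr)^{2}$, which precisely propagates the squaring through the $2^{r-2}$ in the exponent.

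The main obstacle is the second of those estimates: controlling the arithmetic degree after radicalizing a hyperplane section. Radicalization can produce many new associated primes, both from embedded components of $J+(\ell)$ and from components of $V(J)$ lying in non-generic position relative to $\ell$. A careful analysis of how $\text{Ass}(S/(J+(\ell)))$ arises from intersecting the components of $V(J)$ with the generic hyperplane, and of how those components reorganize under radicalization, is the delicate heart of the argument, and is exactly where the squaring in the bound has to be paid for.
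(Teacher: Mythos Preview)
The paper does not contain a proof of this theorem; it is quoted from Hoa \cite{hoareg} and invoked without proof as the input to Corollary~\ref{regradgeombd}. There is therefore nothing in the present paper to compare your proposal against.

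Regarding your sketch on its own merits: the inductive strategy on $r$ via a generic hyperplane section is indeed the standard architecture for doubly-exponential regularity bounds of this shape, and your identification of the base case with curve-regularity estimates of Gruson--Lazarsfeld--Peskine type is correct in spirit. However, what you have written is explicitly a plan rather than a proof: you yourself flag the central obstacle --- controlling $\text{arith-deg}(\overline{J})$ after radicalizing a hyperplane section --- without resolving it. There is also a gap in the lift step. The short exact sequence you invoke gives $\text{reg}(J)=\text{reg}(J+(\ell))$ when $\ell$ is $S/J$-regular, but you then need $\text{reg}(J+(\ell))$ bounded in terms of $\text{reg}(\sqrt{J+(\ell)})$, and that comparison can go the wrong way: regularity may \emph{increase} upon taking radicals, as the Chardin--D'Cruz examples cited in the introduction show. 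So the inductive linkage between $\text{reg}(J)$ and the radical of its hyperplane section requires more than the mechanism you have outlined.
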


We obtain the following upper bound of $\text{reg}(\sqrt{I})$ in terms of multiplicity and geometric degree of $I$ by applying the above inequality to $S/\sqrt{I}$.

\begin{corollary} \label{regradgeombd}
	Let $I$ be a homogeneous ideal in $S$ of dimension $r\geq 2$, then
	\[
	\text{reg}(\sqrt{I})\leq \left(\frac{e(S/I)(e(S/I)-1)}{2}+\text{geom-deg}(I)\right)^{2^{r-2}}.
	\]
\end{corollary}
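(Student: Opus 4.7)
The plan is to apply Hoa's theorem (Theorem \ref{hoathm}) directly to $J=\sqrt{I}$ and then replace the invariants of $\sqrt{I}$ appearing on the right-hand side by the corresponding invariants of $I$. Since $S/\sqrt{I}$ is automatically reduced and has the same dimension $r\geq 2$ as $S/I$, Hoa's bound yields
\[
\text{reg}(\sqrt{I})\leq \left(\frac{e(S/\sqrt{I})(e(S/\sqrt{I})-1)}{2}+\text{arith-deg}(\sqrt{I})\right)^{2^{r-2}}.
\]
It therefore suffices to prove the two comparisons $e(S/\sqrt{I})\leq e(S/I)$ and $\text{arith-deg}(\sqrt{I})\leq \text{geom-deg}(I)$, after which the monotonicity of $\tfrac{x(x-1)}{2}+y$ in both variables finishes the job.

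For the multiplicity comparison, I would apply the associativity formula to both $e(S/I)$ and $e(S/\sqrt{I})$. The two ideals share the same set of top-dimensional minimal primes, and for each such $P$ one has $\text{mult}_{\sqrt{I}}(P)=1$ (since $(S/\sqrt{I})_P$ is a field), whereas $\text{mult}_I(P)\geq 1$ in general. A term-by-term comparison then gives $e(S/\sqrt{I})\leq e(S/I)$.

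For the arithmetic-degree comparison, I would use that $\sqrt{I}$ has no embedded primes and that all its length multiplicities equal $1$. This reduces $\text{arith-deg}(\sqrt{I})$ to $\sum_{P\in\text{Min}(S/I)} e(S/P)$, which is at most the same sum weighted by $\text{mult}_I(P)\geq 1$, i.e., exactly $\text{geom-deg}(I)$.

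I do not anticipate any substantive obstacle; the argument is essentially bookkeeping around the associativity formula. The only mild care needed is consistent tracking of which primes (minimal versus associated, top-dimensional versus all dimensions) enter each sum, and verifying that substitution of the two inequalities into Hoa's bound is justified by the monotonicity of $\tfrac{x(x-1)}{2}+y$ on the nonnegative integers.
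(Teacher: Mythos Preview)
Your proposal is correct and follows essentially the same route as the paper: apply Hoa's theorem to $J=\sqrt{I}$, then use that $\text{Min}(S/\sqrt{I})=\text{Min}(S/I)$ together with $\text{mult}_{\sqrt{I}}(P)=1\leq \text{mult}_I(P)$ at each minimal prime to obtain both $e(S/\sqrt{I})\leq e(S/I)$ and $\text{arith-deg}(\sqrt{I})=\text{geom-deg}(\sqrt{I})\leq \text{geom-deg}(I)$. The paper omits your explicit remark about monotonicity of $\tfrac{x(x-1)}{2}+y$, but otherwise the arguments coincide.
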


\begin{proof}
	Since $\sqrt{I}$ is the intersection of all minimal primes of $I$, we get that $\text{Ass}(S/\sqrt{I})$\linebreak$=\text{Min}(S/\sqrt{I})=\text{Min}(S/I)$. This implies that $\text{arith-deg}(\sqrt{I})=\text{geom-deg}(\sqrt{I})$. Also we have $\text{dim}(S/\sqrt{I})=\text{dim}(S/I)=r$. For any minimal prime $P$ of $I$, notice that $(S/\sqrt{I})_P=S_P/PS_P$, so the length-multiplicity  $\text{mult}_{\sqrt{I}}(P)=l((S/\sqrt{I})_P)=1$ which is clearly bounded above by $\text{mult}_{I}(P)$. It follows that $e(S/\sqrt{I})\leq e(S/I)$ and $\text{geom-deg}(\sqrt{I})\leq \text{geom-deg}(I)$. We get the desired bound by combining Theorem \ref{hoathm} with the above inequalities.
\end{proof}

By applying Corollary \ref{geodegcoro}, we obtain the following upper bound for $\text{reg}(\sqrt{I})$ in terms of the generating degree of $I$. Note that the $0$-dimensional case is trivial since $\sqrt{I}$ is the homogeneous maximal ideal.
\begin{corollary} \label{regraddegreebd}
	Let $I$ be a homogeneous ideal in $S$ of dimension $r\neq 0$ and generated by forms of degree at most $d$, then
	\[
	\text{reg}(\sqrt{I})\leq \left.
	\begin{cases}
		d^{n-1}, & \text{if } r=1\\
		\left(\frac{d^{n-r}(d^{n-r}-1)}{2}+d^{n-1}\right)^{2^{r-2}}, & \text{if } r\geq 2\\
	\end{cases} \right\} \leq d^{(n-1)2^{r-1}}.
	\]
	
\end{corollary}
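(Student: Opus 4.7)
The plan is to feed Corollary \ref{regradgeombd} with the sharpest available estimates for $e(S/I)$ and $\text{geom-deg}(I)$ in terms of $d$, and then simplify the resulting expression. Corollary \ref{geodegcoro} already provides $\text{geom-deg}(I) \le d^{n-1}$. For the multiplicity, the associativity formula identifies $e(S/I)$ with the top-dimensional geometric degree $\text{geom-deg}_r(I)$, and the Bayer--Mumford inequality \cite[Proposition 3.5]{computealggeo} furnishes $\text{geom-deg}_r(I) \le d^{n-r}$. Substituting both estimates into Corollary \ref{regradgeombd} yields the middle bound in the statement whenever $r \ge 2$.

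The case $r = 1$ requires a separate argument, since Corollary \ref{regradgeombd} was only stated for $r \ge 2$. Here $\sqrt{I}$ is radical with all associated primes of dimension one, so $S/\sqrt{I}$ is reduced, equidimensional and one-dimensional, hence Cohen--Macaulay. After passing to an infinite base field (which preserves all invariants in play), prime avoidance produces a linear nonzerodivisor $\ell$ on $S/\sqrt{I}$, and reducing modulo $\ell$ gives an Artinian quotient of length $e(S/\sqrt{I})$ whose regularity is at most $e(S/\sqrt{I}) - 1$. Therefore $\text{reg}(\sqrt{I}) \le e(S/\sqrt{I}) \le \text{geom-deg}(\sqrt{I}) = \text{geom-deg}(I) \le d^{n-1}$ by Corollary \ref{geodegcoro}, which matches the claimed $r = 1$ bound.

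It remains to prove the clean estimate $\left(\tfrac{d^{n-r}(d^{n-r}-1)}{2} + d^{n-1}\right)^{2^{r-2}} \le d^{(n-1)2^{r-1}}$ for $r \ge 2$. It suffices to show $\tfrac{d^{n-r}(d^{n-r}-1)}{2} + d^{n-1} \le d^{2(n-1)}$, since raising this to the power $2^{r-2}$ yields the claim. The key observation is that both $d^{2(n-r)}$ and $d^{n-1}$ are at most $d^{2n-3}$ once $r \ge 2$ and $n \ge 2$, so the sum is bounded by $\tfrac{3}{2}d^{2n-3}$, which in turn is at most $d^{2n-2}$ whenever $d \ge 2$; the remaining case $d = 1$ is trivial because $\sqrt{I}$ is then generated by linear forms and $\text{reg}(\sqrt{I}) = 1$. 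No step here is truly hard: the entire substantive content is packaged in Hoa's theorem and the two bounds from Section 2, and the only place where one must depart from that blueprint is the $r = 1$ case which Hoa's inequality does not cover.
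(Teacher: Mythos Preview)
Your argument follows the paper's proof essentially line for line: invoke Bayer--Mumford for $e(S/I)=\text{geom-deg}_r(I)\le d^{n-r}$, invoke Corollary~\ref{geodegcoro} for $\text{geom-deg}(I)\le d^{n-1}$, plug both into Corollary~\ref{regradgeombd} when $r\ge 2$, and treat $r=1$ separately via the Cohen--Macaulayness of the one-dimensional reduced ring $S/\sqrt{I}$. The only real difference is cosmetic: for $r=1$ the paper cites \cite[Theorem~1.2]{rossi2005castelnuovo} to obtain $\text{reg}(\sqrt{I})\le e(S/\sqrt{I})$ and then uses $e(S/\sqrt{I})\le e(S/I)\le d^{n-1}$, whereas you give the linear-section argument by hand and route the estimate through $\text{geom-deg}$ instead; you also write out the numerics of the final simplification, which the paper omits.

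One small slip to fix: in your $r=1$ chain the equality $\text{geom-deg}(\sqrt{I})=\text{geom-deg}(I)$ is false in general, since $\text{mult}_I(P)$ can exceed $\text{mult}_{\sqrt{I}}(P)=1$ for a minimal prime $P$. Replace ``$=$'' by ``$\le$'' (this is exactly what the proof of Corollary~\ref{regradgeombd} establishes) and the inequality chain goes through unchanged.
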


\begin{proof}
	First notice that $e(S/I)=\text{geom-deg}_r(I)$ by the associativity formula. The $r$-th geometric degree can be bounded by $\text{geom-deg}_r(I) \leq d^{n-r}$ according to \cite[Proposition 3.5]{computealggeo}.
	
	If $r=1$, then $S/\sqrt{I}$ is a one-dimensional reduced ring, therefore it is Cohen-Macaulay. Then apply \cite[Theorem 1.2]{rossi2005castelnuovo} to get $\text{reg}(\sqrt{I})\leq e(S/\sqrt{I})$. By the proof in the previous theorem we have $e(S/\sqrt{I})\leq e(S/I)\leq d^{n-1}$. 
	
	Now assume $r\geq2$. Combining Corollary \ref{regradgeombd} with the inequality $e(S/I) \leq d^{n-r}$ and Corollary \ref{geodegcoro}, we have:
	\begin{equation*}
		\begin{split}
			\text{reg}(\sqrt{I})
			&\leq \left(\frac{e(S/I)(e(S/I)-1)}{2}+\text{geom-deg}(I)\right)^{2^{r-2}}\\
			&\leq \left(\frac{d^{n-r}(d^{n-r}-1)}{2}+d^{n-1}\right)^{2^{r-2}}\\
			&\leq d^{(n-1)2^{r-1}}.
		\end{split}
	\end{equation*}
\end{proof}

\section{Upper bounds for arithmetic degrees}
In this section, our goal is to obtain upper bounds for the arithmetic degrees of any homogeneous ideal $I$. By \cite[Theorem 2.3]{sturmfels},  $\text{arith-deg}_r(I)\leq \text{arith-deg}_r(\text{in}_\prec(I))$ \linebreak $ \text{ for all }r=0,1,\dots,n$ and for any monomial order $\prec$, in particular the arithmetic degrees of $I$ can be bounded above by the arithmetic degrees of its generic initial ideal. Therefore the problem reduces to the case where $I$ is an ideal of Borel-type, and when $\text{char}(\mathbb{K})=0$ we may further assume $I$ is strongly stable. 
\subsection{Arithmetic degrees of strongly-stable ideals and ideals of Borel type}
Let us first consider the case where $I$ is simply a monomial ideal. Notice that every associated prime of $I$ is generated by a subset of the variables so it has the form $P_Z=(x_i : x_i\in \{x_1,\dots,x_n\}\setminus Z)$, and it follows that $e(S/P_Z)=1$. Therefore to compute the arithmetic degrees of $I$, it suffices to compute the length-multiplicities mult$_I(P_Z)$, which in fact have a combinatorial description due to Sturmfels, Trung, and Vogel (see \cite[\S 3]{sturmfels}). To see this we need to define the notion of standard pairs first.

\begin{definition} [Sturmfels, Trung, Vogel] \label{stdp}
	Let $u$ be a monomial in S, let $Z$ be a subset of the variables $\{x_1,\dots,x_n\}$. A pair $(u,Z)$ is called a \textit{standard pair} with respect to the monomial ideal $I$ if the following conditions hold:
	\begin{enumerate}
		\item $Z\cap $ supp($u$)=$\emptyset$,
		\item $u\mathbb{K}[Z]\cap I=\{0\}$,
		\item if $(u^\prime,Z^\prime)\neq (u,Z)$ is another pair such that $Z^\prime\cap $ supp($u^\prime$)=$\emptyset$ and $u^\prime\mathbb{K}[Z^\prime]\cap I=\{0\}$, then $u\mathbb{K}[Z]\not\subset u^\prime\mathbb{K}[Z^\prime]$.
	\end{enumerate}
	Let std$_r(I)$ denote the number of standard pairs of the form $(\cdot,Z)$ such that $|Z|=r$. 
\end{definition}

The following lemma from \cite[Lemma 3.3]{sturmfels} allows us to transform the problem of computing the length-multiplicities into counting the number of standard pairs.
\begin{lemma} [Sturmfels, Trung, Vogel]
	For any subset $Z\subseteq \{x_1,\dots,x_n\}$, \linebreak $\text{mult}_I(P_Z)$ equals the number of standard pairs of the form $(\cdot,Z)$. In particular $\text{arith-deg}_r(I)=\text{std}_r(I)$ for all $r=0,\dots,n$.
\end{lemma}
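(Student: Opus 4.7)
The plan is to prove the first statement on length-multiplicity; the second then follows immediately by summing over subsets $Z$ of cardinality $r$ and using that $e(S/P_Z)=1$ for every such $Z$.

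Fix $Z\subseteq\{x_1,\dots,x_n\}$ and let $W$ denote its complement, so that $P_Z=(W)$. Since every monomial in $\mathbb{K}[Z]$ becomes a unit in $(S/I)_{P_Z}$, one can identify $(S/I)_{P_Z}$ with $\mathbb{K}(Z)[W]/J$, where $J\subseteq\mathbb{K}[W]$ is the monomial ideal consisting of those $w\in\mathbb{K}[W]$ with $w\mathbb{K}[Z]\cap I\neq\{0\}$. Because $(S/I)_{P_Z}$ is naturally a $\mathbb{K}(Z)$-vector space and the simple $S_{P_Z}$-module is $\kappa(P_Z)=\mathbb{K}(Z)$, the length $\text{mult}_I(P_Z)$ equals the $\mathbb{K}(Z)$-dimension of the $(W)$-torsion submodule of $\mathbb{K}(Z)[W]/J$. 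That submodule has a $\mathbb{K}(Z)$-basis consisting of classes $[w]$ of monomials $w\in\mathbb{K}[W]\setminus J$ such that for every $x_i\in W$ some power $x_i^N w$ lies in $J$.

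The heart of the proof is to show that these monomials are exactly the $u$'s appearing in standard pairs $(u,Z)$. For a standard pair $(u,Z)$, condition (1) of Definition \ref{stdp} forces $u\in\mathbb{K}[W]$, condition (2) is exactly $u\notin J$, and condition (3) will give the torsion property. For the forward direction, given $x_i\in W$, I consider the candidate pair $(u/x_i^{a_i},Z\cup\{x_i\})$ where $a_i$ is the $x_i$-exponent of $u$: it satisfies (1), and the strict inclusion $u\mathbb{K}[Z]\subsetneq (u/x_i^{a_i})\mathbb{K}[Z\cup\{x_i\}]$ together with the maximality of $(u,Z)$ forces this candidate to violate (2), producing a relation $(u/x_i^{a_i})\cdot x_i^{k_i}\cdot s_i\in I$ for some $s_i\in\mathbb{K}[Z]$ and $k_i\geq 0$; since $I$ is monomial, the inequality $k_i>a_i$ must hold (else a divisibility argument yields $us_i\in I$, contradicting (2) for $(u,Z)$), and hence $x_i^{k_i-a_i}u\in J$ with $k_i-a_i\geq 1$. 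For the reverse direction I argue by contradiction: a strict containment $u\mathbb{K}[Z]\subsetneq u'\mathbb{K}[Z']$ coming from a competing standard pair forces $Z'\supsetneq Z$, and choosing $x_j\in Z'\setminus Z\subseteq W$ keeps $x_j^N u\in u'\mathbb{K}[Z']$ for all $N$; multiplying by any $s\in\mathbb{K}[Z]\subseteq\mathbb{K}[Z']$ keeps the product inside $u'\mathbb{K}[Z']\cap I=\{0\}$, so $x_j^N u\notin J$ for any $N$, contradicting the assumed torsion.

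The trickiest step is the bookkeeping in the forward direction, specifically verifying that the exponent $k_i$ strictly exceeds $a_i$; this is where the monomial-ideal structure of $I$ is essential, as only then does divisibility lift from the truncated monomial to $us_i$. Once the set-level identification is in hand, linear independence of $\{[u]\}$ in $\mathbb{K}(Z)[W]/J$ is routine: a $\mathbb{K}(Z)$-linear relation clears to a polynomial relation in $S$ whose monomial summands have the form $s_\alpha u$ with $s_\alpha\in\mathbb{K}[Z]$ and $u\in\mathbb{K}[W]$, and the monomial-ideal structure of $J\cdot S$ forces every coefficient to vanish because no $u$ lies in $J$. The desired equality $\text{mult}_I(P_Z)=\#\{(u,Z)\text{ standard pair}\}$ follows.
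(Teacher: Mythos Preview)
The paper does not supply its own proof of this lemma; it is quoted verbatim from \cite[Lemma~3.3]{sturmfels} and used as a black box. So there is no in-paper argument to compare against, and your proposal is in effect a reconstruction of the original Sturmfels--Trung--Vogel proof. Your argument is correct and follows the standard route: pass to $\mathbb{K}(Z)[W]$, identify the contraction of $I$ with the monomial ideal $J$ of $W$-monomials that meet $I$ after multiplication by some $Z$-monomial, and count the $(W)$-torsion standard monomials.

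One small imprecision: the identification ``$(S/I)_{P_Z}\cong\mathbb{K}(Z)[W]/J$'' is not literally correct, since $S_{P_Z}=\mathbb{K}(Z)[W]_{(W)}$ is a further localization; what is true is $(S/I)_{P_Z}\cong\bigl(\mathbb{K}(Z)[W]/J\bigr)_{(W)}$. This does not affect your computation, because the maximal finite-length submodule of a module and of its localization at the maximal ideal coincide, so the $\mathbb{K}(Z)$-dimension you compute is indeed $\text{mult}_I(P_Z)$. The two directions of the bijection between torsion standard monomials and standard pairs $(u,Z)$ are argued correctly; in particular your check that $k_i>a_i$ in the forward direction (using that $I$ is a monomial ideal to lift divisibility) is exactly the point where the hypothesis on $I$ enters.
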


Now let us assume $I$ is an ideal of Borel type, so any associated prime of $I$ must have the form $P=(x_1,\dots,x_i)$ for some $i$. By the above lemma, computing the $r$-th arithmetic degree of $I$ is equivalent to counting the number of standard pairs of $I$ that have the form $(x_1^{c_1}\cdots x_{n-r}^{c_{n-r}},\{x_{n-r+1},\dots,x_n\})$. The following lemma gives us upper bounds on the number of such standard pairs.

Let $G(I)$ be the set of minimal monomial generators of $I$. For a monomial $u$, let Md$_i(u)=\text{max}\{c:x_i^c \text{ divides }u\}$. Denote $\text{Md}_i(I)=\text{max}\{\text{Md}_i(u): u\in G(I)\}$.
\begin{lemma}
	 Let $I$ be an ideal of Borel type. For any $j=1,\dots,n$, \linebreak if $(x_1^{c_1}\cdots x_j^{c_j},\{x_{j+1},\dots,x_n\})$ is a standard pair with respect to $I$, then $0\leq c_i\leq \text{Md}_i(I)-1$ for all $i=1,\dots,j$. In particular $\text{std}_r(I)\leq \text{Md}_1(I)\text{Md}_2(I)\cdots \text{Md}_{n-r}(I)$ for all $r=0,\dots,n-1$.
\end{lemma}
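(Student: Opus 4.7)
The plan is to prove the pointwise bound $c_i\le \text{Md}_i(I)-1$ by a direct appeal to the maximality clause (condition (3)) in the definition of a standard pair, and then read off the counting bound from the restricted shape of $Z$ that the Borel-type hypothesis enforces.

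For the first assertion, suppose for contradiction that $(u,Z)$ with $u=x_1^{c_1}\cdots x_j^{c_j}$ and $Z=\{x_{j+1},\dots,x_n\}$ is a standard pair yet $c_i\ge \text{Md}_i(I)$ for some $i\le j$. I would build the competitor pair
\[
u':=u/x_i^{c_i},\qquad Z':=Z\cup\{x_i\}=\{x_i,x_{j+1},\dots,x_n\},
\]
which trivially satisfies $Z'\cap \text{supp}(u')=\emptyset$, and then verify that $u'\mathbb{K}[Z']\cap I=\{0\}$. Given any monomial of $\mathbb{K}[Z']$ written as $x_i^a m'$ with $m'\in \mathbb{K}[Z]$ and $a\ge 0$, suppose $u'x_i^a m'\in I$ and pick a minimal generator $v\in G(I)$ dividing it. Since by definition $v_i\le \text{Md}_i(I)\le c_i$, and since the exponents of all variables other than $x_i$ in $u'x_i^am'$ and in $um'=u'x_i^{c_i}m'$ agree, $v$ must also divide $um'$. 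Thus $um'\in I$ with $m'\in \mathbb{K}[Z]$, contradicting condition (2) for $(u,Z)$. Because moreover $u\mathbb{K}[Z]\subsetneq u'\mathbb{K}[Z']$ (strict since every element of the left-hand side has $x_i$-degree exactly $c_i$ while $u'\in u'\mathbb{K}[Z']$ has $x_i$-degree $0$; and the case $c_i=0$ is immediate from $Z\subsetneq Z'$), the pair $(u',Z')$ violates condition (3), giving the desired contradiction.

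The counting bound then drops out of the Borel-type hypothesis: every associated prime of $S/I$ has the form $(x_1,\dots,x_k)$, so by the Sturmfels-Trung-Vogel lemma cited above every standard pair $(u,Z)$ with $|Z|=r$ is forced to have $Z=\{x_{n-r+1},\dots,x_n\}$. The corresponding exponents $(c_1,\dots,c_{n-r})$ satisfy $0\le c_i\le \text{Md}_i(I)-1$ by the first part, yielding at most $\prod_{i=1}^{n-r}\text{Md}_i(I)$ such monomials $u$ and hence the asserted bound on $\text{std}_r(I)$.

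The only nontrivial point is the divisibility transfer in the first step: the observation that $v_i\le \text{Md}_i(I)$ for every $v\in G(I)$ is the single ingredient that allows one to upgrade divisibility of $u'x_i^am'$ by $v$ to divisibility of $um'$ by $v$, without which the candidate $(u',Z')$ would not produce a pair properly larger than $(u,Z)$ in the sense of condition (3).
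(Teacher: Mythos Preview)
Your proof is correct. Both you and the paper argue by contradiction and exhibit a pair $(u',Z')$ violating condition~(3), but the two competitors are different. The paper takes $u'=x_1^{c_1}\cdots x_{i-1}^{c_{i-1}}$ and $Z'=\{x_i,\dots,x_n\}$; to show $u'\mathbb{K}[Z']\cap I=\{0\}$ (or else $x_1^{c_1}\cdots x_i^{c_i}\in I$), it must invoke the Borel-type hypothesis to replace an arbitrary monomial $x_1^{c_1}\cdots x_{i-1}^{c_{i-1}}x_i^{p_i}\cdots x_n^{p_n}\in I$ by one supported only on $x_1,\dots,x_i$. Your competitor instead keeps the exponents $c_{i+1},\dots,c_j$ in $u'$ and only adds $x_i$ to $Z$, so the divisibility transfer goes through using nothing more than $v_i\le\text{Md}_i(I)\le c_i$ for $v\in G(I)$. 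This makes your argument for the exponent bound $c_i\le\text{Md}_i(I)-1$ valid for an arbitrary monomial ideal and any standard pair of the prescribed shape; the Borel-type hypothesis is needed only where you use it, namely to force every standard pair with $|Z|=r$ to have $Z=\{x_{n-r+1},\dots,x_n\}$ in the counting step. The paper's route has the minor aesthetic advantage that its $Z'$ is an interval of variables, but yours is genuinely more elementary.
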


\begin{proof}
	Assume for contradiction that $c_i\geq \text{Md}_i(I)$ for some $i=1,\dots,j$, we claim that either $x_1^{c_1}\cdots x_i^{c_i}\in I$ or $x_1^{c_1}\cdots x_{i-1}^{c_{i-1}}\mathbb{K}[x_{i},\dots,x_n]\cap I=\{0\}$. Notice that if $x_1^{c_1}\cdots x_i^{c_i}\in I$, then $x_1^{c_1}\cdots x_j^{c_j}\in I$, so the first case violates conditon (2) in the definition of standard pairs. The second case violates condition (3) since $x_1^{c_1}\cdots x_{j}^{c_{j}}\mathbb{K}[x_{j+1},\dots,x_n]\subset x_1^{c_1}\cdots x_{i-1}^{c_{i-1}}\mathbb{K}[x_{i},\dots,x_n]$. To prove the claim, assume there exists a monomial $x_1^{c_1}\cdots x_{i-1}^{c_{i-1}} x_i^{p_i}\cdots x_n^{p_n}\in$ $x_1^{c_1}\cdots x_{i-1}^{c_{i-1}}\mathbb{K}[x_{i},\dots,x_n]\cap I$. Since $I$ is an ideal of Borel type, there exists a monomial of the form $x_1^{c_1}\cdots x_{i-1}^{c_{i-1}}x_i^{q_i}\in I$. Let $x_1^{c_1^\prime}\cdots x_{i-1}^{c_{i-1}^\prime}x_{i}^{q_{i}^\prime}$ be the monomial in the minimal generating set that divides $x_1^{c_1}\cdots x_{i-1}^{c_{i-1}}x_{i}^{q_{i}}$. Then we have $c_1^\prime \leq c_1,\dots,c_{i-1}^\prime \leq c_{i-1},$ and $q_i^\prime \leq \text{Md}_i(I)\leq c_i$. It follows that $x_1^{c_1^\prime}\cdots x_{i-1}^{c_{i-1}^\prime}x_{i}^{q_{i}^\prime}$ divides $x_1^{c_1}\cdots x_i^{c_i}$, therefore $x_1^{c_1}\cdots x_i^{c_i}\in I$.
	
\end{proof}

\begin{corollary} \label{weaklystablebd}
	Let $I$ be an ideal of Borel type in $S$ generated by monomials of degree at most $D$. Let $I_{[i]}$ denote the image of $I$ in $S/(x_{i+1},\dots,x_n)\cong \mathbb{K}[x_1,\dots,x_i]$ and $D(I_{[i]})$ denote the generating degree of $I_{[i]}$. Then $\text{arith-deg}_r(I)\leq \prod_{i=1}^{n-r} D(I_{[i]})$ for all $r=0,\dots,n-1$.
\end{corollary}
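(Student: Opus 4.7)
The plan is to combine the identity $\text{arith-deg}_r(I) = \text{std}_r(I)$ from the Sturmfels-Trung-Vogel lemma with a sharpening of the preceding lemma, replacing $\text{Md}_i(I)$ by $D(I_{[i]})$. Since $I$ is of Borel type, every associated prime has the form $(x_1, \ldots, x_k)$, and every standard pair of $I$ therefore takes the form $(x_1^{c_1}\cdots x_{n-r}^{c_{n-r}}, \{x_{n-r+1}, \ldots, x_n\})$. It suffices to prove the sharper exponent bound $c_i \leq D(I_{[i]}) - 1$ for each $i = 1, \ldots, n-r$, since counting monomials then yields $\text{std}_r(I) \leq \prod_{i=1}^{n-r} D(I_{[i]})$.

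To establish the bound on $c_i$, I mirror the dichotomy of the preceding lemma: if $c_i \geq D(I_{[i]})$, I claim that either $x_1^{c_1}\cdots x_i^{c_i} \in I$ (contradicting condition (2) for the standard pair) or $x_1^{c_1}\cdots x_{i-1}^{c_{i-1}}\mathbb{K}[x_i,\ldots,x_n] \cap I = \{0\}$ (contradicting the maximality condition (3), since the pair would extend to $(x_1^{c_1}\cdots x_{i-1}^{c_{i-1}}, \{x_i,\ldots,x_n\})$). Assuming the second alternative fails, pick a monomial $x_1^{c_1}\cdots x_{i-1}^{c_{i-1}}x_i^{p_i}\cdots x_n^{p_n} \in I$. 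Iteratively applying the Borel-type property at the largest index $k > i$ with positive exponent, taking $j = i$ in the definition, strips off that variable while raising the $x_i$-exponent; after finitely many such steps one arrives at a monomial $m = x_1^{c_1}\cdots x_{i-1}^{c_{i-1}} x_i^{q_i} \in I$.

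The key new observation is that $m \in \mathbb{K}[x_1,\ldots,x_i] \cap I$, which for a monomial ideal equals $I_{[i]}$. Hence any minimal monomial generator $g = x_1^{c_1'}\cdots x_i^{c_i'}$ of $I$ dividing $m$ must itself lie in $\mathbb{K}[x_1,\ldots,x_i]$ and therefore is automatically a minimal generator of $I_{[i]}$. We then obtain $c_i' \leq \deg(g) \leq D(I_{[i]}) \leq c_i$, while $c_k' \leq c_k$ for $k<i$. Thus $g$ divides $x_1^{c_1}\cdots x_i^{c_i}$, forcing this monomial into $I$, which is the first alternative of the dichotomy.

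The main technical point, and the only real departure from the preceding lemma's proof, is the replacement of the coarser bound $c_i' \leq \text{Md}_i(I)$ by $c_i' \leq \deg(g) \leq D(I_{[i]})$; this refinement is available precisely because the monomial $m$ lives in $\mathbb{K}[x_1,\ldots,x_i]$, forcing the relevant minimal generator of $I$ to come from the smaller-variable projection $I_{[i]}$ rather than from $I$ at large. The iterative Borel-type elimination that produces $m$ is identical to what is used in the preceding lemma.
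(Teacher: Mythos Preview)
Your argument is correct, but it takes a different route from the paper. The paper does not redo the standard--pair analysis; it simply combines the previous lemma's bound $\text{std}_r(I)\leq \prod_{i=1}^{n-r}\text{Md}_i(I)$ with the identity $\text{Md}_i(I)=\text{Md}_i(I_{[i]})$ for Borel--type ideals (cited from Caviglia--Sbarra) and the trivial inequality $\text{Md}_i(I_{[i]})\leq D(I_{[i]})$. Your approach instead repeats the dichotomy argument of the previous lemma and inserts the sharper observation that the minimal generator $g$ of $I$ dividing $m=x_1^{c_1}\cdots x_{i-1}^{c_{i-1}}x_i^{q_i}$ must already lie in $\mathbb{K}[x_1,\ldots,x_i]$, hence is a minimal generator of $I_{[i]}$, giving $c_i'\leq \deg(g)\leq D(I_{[i]})$. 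This is exactly the content underlying the Caviglia--Sbarra identity, so you have effectively absorbed that citation into a self--contained proof. The payoff is that your argument needs no external reference; the cost is that it repeats the mechanics of the preceding lemma rather than simply invoking it. Note also that the paper's route actually passes through the sharper intermediate bound $\prod_i \text{Md}_i(I_{[i]})$, which your degree bound $c_i'\leq \deg(g)$ bypasses.
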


\begin{proof}
	Since $I$ is an ideal of Borel type, by \cite[Lemma 1.5]{cavigliasbarra} we have $\text{Md}_{i}(I_{[i]})=\text{Md}_i(I)$ for all $i=1,\dots,n$. Also it is clear that $\text{Md}_{i}(I_{[i]})\leq D(I_{[i]})$. The rest follows from the previous lemma.
\end{proof}

Now we further assume that $I$ is a strongly stable. Given a set of monomials $U=\{u_1,\dots,u_s\}$, we can obtain the smallest strongly stable ideal containing $U$ by adjoining to $U$ all the monomials that can be obtained by swapping variables as in the strongly stable definition and letting this new set be the generating set. Let us first consider the simplest case where $I$ is the smallest strongly stable ideal containing a single monomial (sometimes called principal Borel ideal), in this case we can list all possible standard pairs of $I$.

\begin{lemma} \label{strongstd}
	Let $I$ be the smallest strongly stable ideal containing a monomial $u=x_1^{l_1}\cdots x_n^{l_n}$. For all $j=1,\dots,n$, we have $x_1^{c_1}\cdots x_j^{c_j}\mathbb{K}[x_{j+1},\dots,x_n]\cap I=\{0\}$ if and only if $c_1\leq l_1-1$, or $c_1+c_2\leq l_1+l_2-1$, or $\cdots$, or $c_1+\cdots+c_j\leq l_1+\cdots+l_j-1$.
\end{lemma}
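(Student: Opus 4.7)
The plan is to rephrase membership in $I$ in terms of the Borel orbit $\mathcal{B}(u)$ of $u$ and then verify each direction of the iff. Write $L_k = l_1 + \cdots + l_k$ and $C_k = c_1 + \cdots + c_k$. Since $I$ is the smallest strongly stable ideal containing $u$, its minimal monomial generators are exactly the elements of $\mathcal{B}(u)$, i.e.\ monomials $w = x_1^{b_1}\cdots x_n^{b_n}$ with $\sum_i b_i = L_n$ and $\sum_{i\leq k} b_i \geq L_k$ for every $k$. Indeed, a single Borel move $w \mapsto x_{j'}w/x_i$ (with $j' \leq i$) preserves the total degree and weakly increases every left-partial sum, and a routine induction shows that any exponent vector satisfying the partial-sum inequalities is reachable from $u$. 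Consequently, a monomial $v$ lies in $I$ if and only if some $w \in \mathcal{B}(u)$ divides $v$.

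Since the exponents of $x_{j+1}, \dots, x_n$ in an element of $x_1^{c_1}\cdots x_j^{c_j}\mathbb{K}[x_{j+1},\dots,x_n]$ may be chosen arbitrarily, the intersection with $I$ is nonzero precisely when some $w \in \mathcal{B}(u)$ satisfies $b_i \leq c_i$ for all $i \leq j$. The ``if'' direction is then immediate: if $C_{k_0} \leq L_{k_0} - 1$ for some $k_0 \leq j$, any such $w$ would obey $\sum_{i \leq k_0} b_i \leq C_{k_0} < L_{k_0}$, contradicting the defining inequality of $\mathcal{B}(u)$.

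For the converse, assume $C_k \geq L_k$ for all $k = 1, \dots, j$; I will exhibit $w \in \mathcal{B}(u)$ with $b_i \leq c_i$ for $i \leq j$ by setting
\[
b_k = \min(C_k, L_n) - \min(C_{k-1}, L_n) \quad (1 \leq k \leq j), \qquad b_{j+1} = L_n - \min(C_j, L_n),
\]
and $b_i = 0$ for $i > j+1$. Since the map $t \mapsto \min(t, L_n)$ is non-decreasing and $1$-Lipschitz, each $b_k$ is non-negative with $b_k \leq C_k - C_{k-1} = c_k$ for $k \leq j$; the sums telescope to $\sum_{i \leq k} b_i = \min(C_k, L_n)$ for $k \leq j$ and $\sum_{i \leq k} b_i = L_n$ for $k \geq j+1$, which together with the hypothesis and $L_k \leq L_n$ force $\sum_{i \leq k} b_i \geq L_k$ at every index and $\sum_i b_i = L_n$. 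Thus $w \in \mathcal{B}(u)$, and $x_1^{c_1}\cdots x_j^{c_j} x_{j+1}^{b_{j+1}}$ is a nonzero element of the intersection. The main technical point is packaging the construction into a single closed form that handles both $C_j \leq L_n$ (no overshoot) and $C_j > L_n$ (must cap at $L_n$) uniformly; clipping each cumulative sum at $L_n$ achieves exactly this.
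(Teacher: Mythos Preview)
Your proof is correct and rests on the same underlying idea as the paper's: characterize membership in the principal Borel ideal via the partial-sum inequalities $\sum_{i\le k} b_i \ge L_k$ and then read off the criterion on the $c_i$'s. The only real difference is in the converse step: because you restrict to the minimal generators (degree exactly $L_n$), you must build an explicit witness and your clipping construction $b_k=\min(C_k,L_n)-\min(C_{k-1},L_n)$ does this cleanly; the paper instead looks at the larger family of monomials $x_1^{a_1}\cdots x_j^{a_j}x_{j+1}^{l_{j+1}}\cdots x_n^{l_n}\in I$ (any degree, tail fixed), for which the choice $a_i=c_i$ works directly and no construction is needed. Your route is slightly more laborious but also more self-contained, since the paper's first sentence (that all such monomials lie in $I$) itself quietly needs a reduction like yours when $\sum a_i>L_j$.
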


\begin{proof}
	Since $I$ is strongly stable and contains $u=x_1^{l_1}\cdots x_n^{l_n}$, it also contains all monomials of the form $x_1^{a_1}\cdots x_j^{a_j}x_{j+1}^{l_{j+1}}\cdots x_n^{l_n}$ for all $a_1,\dots,a_j\geq 0$ such that $a_1\geq l_1,a_1+a_2\geq l_1+l_2,\dots,$ and $a_1+\cdots+a_j\geq l_1+\cdots+l_j$. Notice that $x_1^{c_1}\cdots x_j^{c_j}\mathbb{K}[x_{j+1},\dots,x_n]\cap I=\{0\}$ if and only if for all such $a_1,\dots,a_j$ as above, either $c_1<a_1$, or $c_2<a_2$, or $\dots$, or $c_j<a_j$. The latter statement is equivalent to $c_1\leq l_1-1$, or $c_1+c_2\leq l_1+l_2-1$, or $\cdots$, or $c_1+\cdots+c_j\leq l_1+\cdots+l_j-1$.
\end{proof}

\begin{corollary}
	 Let $I$ be the smallest strongly stable ideal containing a monomial $u=x_1^{l_1}\cdots x_n^{l_n}$. Then the set of all standard pairs of $I$ is equal to \linebreak $\bigcup_{j=1,\dots,n}\{(x_1^{c_1}\cdots x_j^{c_j},\{x_{j+1},\dots,x_n\}):c_1+\cdots+c_i\geq l_1+\cdots+l_i \text{ for all } i=1,\dots,j-1 \text{ and } c_1+\cdots+c_j\leq l_1+\cdots+l_j-1 \}$.
\end{corollary}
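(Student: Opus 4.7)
The plan is to combine the admissibility characterization from Lemma \ref{strongstd} with a direct analysis of the maximality condition (3) of Definition \ref{stdp}. Since $I$ is strongly stable and therefore of Borel type, every standard pair of $I$ has the form $(x_1^{c_1}\cdots x_j^{c_j},\{x_{j+1},\dots,x_n\})$ for some $j\in\{1,\dots,n\}$ and nonnegative integers $c_1,\dots,c_j$, so it suffices to test candidate pairs of this shape. By Lemma \ref{strongstd}, such a candidate satisfies condition (2) of Definition \ref{stdp} iff there exists some $i\in\{1,\dots,j\}$ with $c_1+\cdots+c_i\leq l_1+\cdots+l_i-1$, and the remaining work is to determine which admissible candidates are also maximal.

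For maximality, I analyze admissible pairs $(u^*,Z^*)$ properly containing a candidate $(u,Z)=(x_1^{c_1}\cdots x_j^{c_j},\{x_{j+1},\dots,x_n\})$. It suffices to consider containing \emph{standard} pairs, which by the Borel-type structure have $Z^*=\{x_{k+1},\dots,x_n\}$ for some $k$. The strict containment $u\mathbb{K}[Z]\subsetneq u^*\mathbb{K}[Z^*]$, together with $\text{supp}(u^*)\cap Z^*=\emptyset$ and the relations $u^*\mid u$ and $\text{supp}(u/u^*)\subset Z^*$, forces $k<j$ and $u^*=x_1^{c_1}\cdots x_k^{c_k}$ (so the first $k$ exponents are inherited from $u$ unchanged). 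Applying Lemma \ref{strongstd} to $(u^*,Z^*)$, this pair is admissible iff some $i\leq k$ satisfies $c_1+\cdots+c_i\leq l_1+\cdots+l_i-1$. Conversely, any such index $i<j$ immediately yields an admissible pair strictly containing $(u,Z)$. Hence $(u,Z)$ is maximal exactly when $c_1+\cdots+c_i\geq l_1+\cdots+l_i$ for every $i<j$.

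Putting the two halves together, admissibility requires some $i\in\{1,\dots,j\}$ with $c_1+\cdots+c_i\leq l_1+\cdots+l_i-1$, while maximality rules out such an $i<j$, so the witness must be $i=j$. This recovers precisely the inequalities stated in the corollary. The main obstacle is the shape-of-extension argument in the second paragraph: one must rule out admissible extensions of $(u,Z)$ in which $u^*$ drops only some of the first $j$ exponents of $u$, or in which $Z^*$ is not of the interval form $\{x_{k+1},\dots,x_n\}$. Both are eliminated by the interplay between the support disjointness in Definition \ref{stdp}, the divisibility forced by the monoid containment, and the Borel-type structure of $I$, but the case analysis must be carried out carefully to be exhaustive.
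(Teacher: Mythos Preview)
Your argument is correct and follows the same route as the paper: reduce to candidate pairs of the Borel shape $(x_1^{c_1}\cdots x_j^{c_j},\{x_{j+1},\dots,x_n\})$, invoke Lemma~\ref{strongstd} for condition~(2), and test maximality against the truncated pairs $(x_1^{c_1}\cdots x_i^{c_i},\{x_{i+1},\dots,x_n\})$ for $i<j$. The paper's proof records only the forward implication explicitly and leaves the rest to Lemma~\ref{strongstd}; your version spells out the extension analysis (why any properly containing admissible pair must have $u^*=x_1^{c_1}\cdots x_k^{c_k}$ with $k<j$), which is exactly what is needed to make the converse direction precise.
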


\begin{proof}
	Notice that if $(x_1^{c_1}\cdots x_j^{c_j},\{x_{j+1},\dots,x_n\})$ is a standard pair, then by condition (3) in Definition \ref{stdp}, we must have $x_1^{c_1}\cdots x_i^{c_i}\mathbb{K}[x_{i+1},\dots,x_n]\cap I\neq \{0\}$ for all $1\leq i<j$. The rest follows from the previous lemma.
\end{proof}

In general for an arbitrary strongly stable ideal $I$, there exists a unique minimal set of monomials in $I$ such that $I$ is the smallest strongly stable ideal containing it. Such a set can often be much smaller than a minimal generating set. With information of this set, we can find upper bounds of the arithmetic degrees of $I$.

\begin{corollary} \label{stronglytstablebd}
	Let $I$ be the smallest strongly stable ideal containing a set of monomials $\{u_i=x_1^{l_{i_1}}\cdots x_n^{l_{i_n}}: i=1,\dots,s\}$. Then for all $r=0,\dots,n-1,
	$
	\[
	\text{arith-deg}_r(I)= \text{std}_r(I)\leq \binom{L_{n-r}+n-r-1}{n-r}
	\]
	where $L_{n-r}=\text{max}\{l_{i_1}+\cdots+l_{i_{n-r}}: i=1,\dots,s\}$.
	
\end{corollary}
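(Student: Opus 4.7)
The plan is to reduce the claim to the pointwise inequality $c_1 + \cdots + c_j \leq L_j - 1$ for every standard pair $(x_1^{c_1}\cdots x_j^{c_j}, \{x_{j+1}, \ldots, x_n\})$ of $I$, where $j = n - r$. Once this is established, stars-and-bars counts the tuples $(c_1, \ldots, c_j) \in \mathbb{Z}_{\geq 0}^j$ with $\sum c_\ell \leq L_j - 1$ as exactly $\binom{L_j + j - 1}{j}$, and the Sturmfels--Trung--Vogel identity $\text{arith-deg}_r(I) = \text{std}_r(I)$ then yields the statement.

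To prove the pointwise bound, I would use conditions (2) and (3) of the standard-pair definition in tandem. First, apply (3) to the strictly larger pair $(x_1^{c_1}\cdots x_{j-1}^{c_{j-1}}, \{x_j, x_{j+1}, \ldots, x_n\})$: it must fail (2), so $x_1^{c_1}\cdots x_{j-1}^{c_{j-1}}\, \mathbb{K}[x_j, \ldots, x_n] \cap I \neq \{0\}$. Iteratively using strong stability to replace each occurrence of $x_\ell$ with $\ell > j$ by $x_j$ then upgrades this to a monomial of the form $v'' = x_1^{c_1}\cdots x_{j-1}^{c_{j-1}} x_j^B \in I$. Since $I$ is the smallest strongly stable ideal containing $\{u_1, \ldots, u_s\}$, the monomial $v''$ is divisible by some element $w$ in the Borel hull of a Borel generator $u_k$; because $v''$ is supported in $\{x_1, \ldots, x_j\}$, so is $w = x_1^{w_1}\cdots x_j^{w_j}$. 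From $\sum_\ell w_\ell = \deg u_k$, the Borel partial-sum inequalities $w_1 + \cdots + w_\ell \geq l_{k_1} + \cdots + l_{k_\ell}$ for $\ell < j$, and the divisibility constraints $w_\ell \leq c_\ell$ for $\ell \leq j-1$, one obtains
\[
c_1 + \cdots + c_\ell \geq l_{k_1} + \cdots + l_{k_\ell}, \qquad \ell = 1, \ldots, j - 1.
\]

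Second, apply condition (2) of the standard pair to the sub-ideal $I_k \subseteq I$: Lemma \ref{strongstd}, applied to the principal Borel ideal $I_k$ generated by $u_k$, forces at least one of the disjuncts $c_1 + \cdots + c_m \leq l_{k_1} + \cdots + l_{k_m} - 1$ for $m \in \{1, \ldots, j\}$ to hold. The inequalities from the previous paragraph rule out $m = 1, \ldots, j - 1$, so the only possibility is $c_1 + \cdots + c_j \leq l_{k_1} + \cdots + l_{k_j} - 1 \leq L_j - 1$, which is the desired pointwise bound.

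The main subtlety is ensuring that the Borel generator $u_k$ produced naturally from (3) and strong stability is precisely the one against which Lemma \ref{strongstd} will eliminate all but the last disjunct; the Borel partial-sum conditions on $w$ turn out to match exactly the hypotheses needed. The boundary case $j = 1$ is mildly degenerate (the intermediate inequalities are vacuous), but (2) applied to a suitable $I_k$ still directly gives $c_1 \leq l_{k_1} - 1 \leq L_1 - 1$.
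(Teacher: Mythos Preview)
Your argument is correct and lands on the same pointwise inequality $c_1+\cdots+c_j\le L_j-1$ that the paper proves, after which both of you count via stars-and-bars. The packaging differs, though. You proceed constructively: from condition~(3) you produce a monomial in $I$ supported on $x_1,\dots,x_j$, read off a specific Borel generator $u_k$ dividing it, use the Borel-hull partial-sum inequalities to get $c_1+\cdots+c_\ell\ge l_{k_1}+\cdots+l_{k_\ell}$ for $\ell<j$, and then invoke Lemma~\ref{strongstd} for $I_k$ once to pin down the last disjunct. The paper instead argues by contradiction without ever singling out a particular $k$: assuming $c_1+\cdots+c_j\ge l_{i_1}+\cdots+l_{i_j}$ for \emph{every} $i$, it applies Lemma~\ref{strongstd} to each $I_i$ at level $j$ (so one of the first $j-1$ disjuncts holds) and then at level $j-1$ in the converse direction to conclude $x_1^{c_1}\cdots x_{j-1}^{c_{j-1}}\mathbb{K}[x_j,\dots,x_n]\cap I_i=\{0\}$ for all $i$, hence $\cap\, I=\{0\}$, contradicting~(3). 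The paper's route is a little more economical---it reuses Lemma~\ref{strongstd} in both directions and never needs the explicit Borel-hull description---while yours has the advantage of exhibiting exactly which $u_k$ witnesses the bound.
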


\begin{proof}
	\sloppy For all $i=1,\dots,s$, let $I_i$ be the smallest strongly stable ideal containing $u_i$. Then $x_1^{c_1}\cdots x_j^{c_j}\mathbb{K}[x_{j+1},\dots,x_n]\cap I=\{0\}$ if and only if $x_1^{c_1}\cdots x_j^{c_j}\mathbb{K}[x_{j+1},\dots,x_n]\cap I_i=\{0\}$ for all $i$. Let $(x_1^{c_1}\cdots x_{n-r}^{c_{n-r}}, \{x_{n-r+1},\dots,x_n\})$ be a standard pair of $I$, then by Definition \ref{stdp} and the above equivalence we have that $x_1^{c_1}\cdots x_{n-r}^{c_{n-r}}\mathbb{K}[x_{n-r+1},\dots,x_n]\cap I_i=\{0\}$ for all $i$ and $x_1^{c_1}\cdots x_{n-r-1}^{c_{n-r-1}}\mathbb{K}[x_{n-r},\dots,x_n]\cap I\neq \{0\}$. Assume for contradiction that $c_1+\cdots+c_{n-r}\geq l_{i_1}+\cdots+l_{i_{n-r}}$ for all $i$, then by Lemma \ref{strongstd}, this implies that $x_1^{c_1}\cdots x_{n-r-1}^{c_{n-r-1}}\mathbb{K}[x_{n-r},\dots,x_n]\cap I_i=\{0\}$ for all $i$ and hence $x_1^{c_1}\cdots x_{n-r-1}^{c_{n-r-1}}\mathbb{K}[x_{n-r},\dots,x_n]\cap I=\{0\}$, which is a contradiction. Thus  for some $i=1,\dots,s$, we get $c_1+\cdots+c_{n-r}\leq l_{i_1}+\cdots+l_{i_{n-r}}-1$. In particular every standard pair $(x_1^{c_1}\cdots x_{n-r}^{c_{n-r}}, \{x_{n-r+1},\dots,x_n\})$ satisfies $c_1+\cdots+c_{n-r}\leq L_{n-r}-1$. Counting the number of all possible $c_1,\dots,c_{n-r}\geq 0$ gives us the desired upper bound.
\end{proof}

\subsection{Upper bounds for arithmetic degrees of homogeneous ideals}
In this subsection, we derive bounds for arithmetic degrees of homogeneous ideals in terms of their generating degrees by passing to their generic initial ideals. Since the generic initial ideals are ideals of Borel type, we can apply our result from the previous subsection to obtain the following bound.
\begin{theorem} \label{arithdeghomideal}
	Let $I$ be a homogeneous ideal in $S=\mathbb{K}[x_1,\dots,x_n]$ generated by forms of degree at most $d$. For all $r=0,\dots,n-1$, the $r$-th arithmetic degree of $I$ is bounded above by
	\[
	\text{arith-deg}_r(I)\leq 2\cdot d^{2^{n-r-1}}.
	\]
\end{theorem}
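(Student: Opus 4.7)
The plan is to reduce to the Borel-type setting via the generic initial ideal, apply Corollary~\ref{weaklystablebd}, and then bound each factor in the resulting product by a regularity estimate on a projection of $I$.

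More precisely, since $\text{arith-deg}_r(I)\leq \text{arith-deg}_r(\text{in}_{\prec}(I))$ holds for any monomial order by \cite[Theorem 2.3]{sturmfels}, I would first replace $I$ by $\text{gin}_{\text{revlex}}(I)$, which is of Borel type. Corollary~\ref{weaklystablebd} then gives
\[
\text{arith-deg}_r(I)\leq \prod_{i=1}^{n-r} D\bigl(\text{gin}_{\text{revlex}}(I)_{[i]}\bigr).
\]
The key identification will be $D(\text{gin}_{\text{revlex}}(I)_{[i]})=\text{reg}(I_{[i]})$. This follows by combining two well-known properties of the reverse-lex gin: the commutation $\text{gin}_{\text{revlex}}(I+(x_{i+1},\dots,x_n))=\text{gin}_{\text{revlex}}(I)+(x_{i+1},\dots,x_n)$, which identifies $\text{gin}_{\text{revlex}}(I)_{[i]}$ with $\text{gin}_{\text{revlex}}(I_{[i]})$ in $\mathbb{K}[x_1,\dots,x_i]$; and the Bayer-Stillman theorem $D(\text{gin}_{\text{revlex}}(J))=\text{reg}(J)$. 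Thus the task reduces to showing
\[
\prod_{i=1}^{n-r} \text{reg}(I_{[i]}) \leq 2 d^{2^{n-r-1}},
\]
where each $I_{[i]}$ is a homogeneous ideal of $\mathbb{K}[x_1,\dots,x_i]$ generated in degrees at most $d$.

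For the final step I would apply a sharpened doubly exponential regularity bound on each projection, as in \cite[Corollary 2.13]{caviglia2021linearly}. Used carefully---treating $\text{reg}(I_{[1]}) \leq d$, $\text{reg}(I_{[2]}) \leq 2d$, and $\text{reg}(I_{[i]}) \leq d^{2^{i-2}}$ for $i \geq 3$---the identity $\sum_{i=3}^{n-r} 2^{i-2} = 2^{n-r-1} - 2$ makes the exponents of $d$ telescope and the product collapses to $2d^{1+1+(2^{n-r-1}-2)} = 2 d^{2^{n-r-1}}$, which is exactly the claimed bound.

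The main obstacle is securing the sharp regularity estimate $\text{reg}(I_{[i]}) \leq d^{2^{i-2}}$ (without an extra constant factor in the base) for $i \geq 3$. A naive use of the classical Caviglia-Sbarra bound $(2d)^{2^{i-2}}$ on each projection would inflate the leading constant across the product to $2^{2^{n-r-1}-1}$, much larger than the stated $2$. The delicate point is therefore to invoke the tightest available form of the regularity bound and to keep careful track of how the constants accumulate as $i$ ranges from $1$ to $n-r$; this is where the improvement over the naive $d^{(n-r)2^{n-r-1}}$ bound obtained by combining Bayer-Mumford with Caviglia-Sbarra really comes from.
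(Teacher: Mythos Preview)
Your overall strategy---pass to the reverse-lex generic initial ideal $J=\text{gin}_{\text{revlex}}(I)$, apply Corollary~\ref{weaklystablebd}, bound each $D(J_{[i]})$ by the regularity of an $i$-variable projection of $I$, and telescope using the sharp bound of \cite[Corollary~2.13]{caviglia2021linearly} together with the cases $i=1,2$---is exactly the paper's, and your final-paragraph arithmetic is correct.

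The gap is in the ``commutation'' step. The identity $\text{gin}_{\text{revlex}}(I+(x_{i+1},\dots,x_n))=\text{gin}_{\text{revlex}}(I)+(x_{i+1},\dots,x_n)$ is false for general homogeneous $I$: while $\text{in}_{\text{revlex}}(J,x_n)=(\text{in}_{\text{revlex}}(J),x_n)$ does hold for a fixed $J$, the generic coordinate change $\alpha$ defining the gin does not preserve $(x_{i+1},\dots,x_n)$. For instance, with $I=(x_2^2)\subset\mathbb{K}[x_1,x_2]$ one has $\text{gin}(I)+(x_2)=(x_1^2,x_2)$ but $\text{gin}(I+(x_2))=\text{gin}((x_2))=(x_1)$. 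Consequently $\text{gin}(I)_{[i]}$ is \emph{not} $\text{gin}(I_{[i]})$, and your identification $D(\text{gin}(I)_{[i]})=\text{reg}(I_{[i]})$ fails.

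The fix, which is what the paper does, is to replace the last variables by \emph{general} linear forms: let $I_{\langle i\rangle}$ be the image of $I$ in $S/(l_n,\dots,l_{i+1})\cong\mathbb{K}[x_1,\dots,x_i]$ for general $l_j$. Then one has $D(J_{[i]})\leq\text{reg}(J_{[i]})=\text{reg}(I_{\langle i\rangle})$ (see \cite[Remark~2.12]{zerogin}; note also that your appeal to Bayer--Stillman gives only $\leq$ outside characteristic zero, which is all that is needed). Since $I_{\langle i\rangle}$ is still generated in degrees at most $d$, your bounds $\text{reg}(I_{\langle 1\rangle})\leq d$, $\text{reg}(I_{\langle 2\rangle})\leq 2d-1$, and $\text{reg}(I_{\langle i\rangle})\leq d^{2^{i-2}}$ for $i\geq3$ apply verbatim and the telescoping finishes the proof exactly as you wrote.
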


\begin{proof}
	Let $J=\text{gin}_{\text{revlex}}(I)$ be the generic initial ideal of $I$ with respect to the degree reverse lexicographical order. Then by \cite[Theorem 2.3]{sturmfels}, we have $\text{arith-deg}_r(I)\leq \text{arith-deg}_r(J)$. For any $i=1,\dots,n-r$, let $I_{\langle i \rangle}$ denote the image of $I$ in \linebreak $S/(l_{n},\dots,l_{i+1})\cong \mathbb{K}[x_1,\dots,x_i]$ where $l_{n},\dots,l_{i+1}$ are general linear forms. Notice that $D(J_{[i]})\leq \text{reg}(J_{[i]})= \text{reg}(I_{\langle i \rangle})$ for all $i$ by \cite[Remark 2.12]{zerogin}. When $i\geq 3$, we have $ \text{reg}(I_{\langle i \rangle})\leq d^{2^{i-2}}$ by the regularity bound given in \cite[Corollary 2.13]{caviglia2021linearly}. When $i=1,2$, we have $\text{reg}(I_{\langle 1 \rangle})\leq d$ and $\text{reg}(I_{\langle 2 \rangle})\leq2d-1$ by \cite[\S 2]{cavigliasbarra}. Combining the above inequalities with Corollary \ref{weaklystablebd}, we get that
	\[
	\begin{split}
		\text{arith-deg}_r(I)
		\leq \text{arith-deg}_r(J)
		&\leq \prod_{i=1}^{n-r} D(J_{[i]})\\
		&\leq d\cdot (2d) \cdot \prod_{i=3}^{n-r} d^{2^{i-2}}\\
		&= 2\cdot d^{2^{n-r-1}}.\\
	\end{split}
	\]
\end{proof}

\section*{Acknowledgement}

The author would like to thank her advisor Giulio Caviglia for proposing these problems.

\bibliographystyle{amsplain}
\bibliography{regrad}{}

\end{document}